\newtheorem{theorem}{Theorem}
\newtheorem{corollary}[theorem]{Corollary}
\newtheorem{definition}[theorem]{Definition}
\newtheorem{example}[theorem]{Example}
\newtheorem{lemma}[theorem]{Lemma}
\newenvironment{proof}[1][Proof]{\textbf{#1.} }{\ \rule{0.5em}{0.5em}}
\begin{document}

\title{Integer Powers of Certain Complex Pentadiagonal $2-$Toeplitz Matrices}
\author{Hatice K\"{u}bra Duru\thanks{%
hkduru@selcuk.edu.tr} and Durmu\c{s} Bozkurt\thanks{%
dbozkurt@selcuk.edu.tr} \\
Selcuk University, Science Faculty Department of Mathematics,\\
Turkey}
\maketitle

\begin{abstract}
In this study, we get a general expression for the entries of the $s$th
power of even order pentadiagonal $2$-Toeplitz matrices.
\end{abstract}

\section{Introduction}

Gover \cite{Gover} gained eigenvalues and eigenvectors of a tridiagonal
2-Toeplitz matrix in terms of the Chebyshev polynomials. Hadj and Elouafi 
\cite{Ahmed} obtained the general expression of the characteristic
polynomial, determinant and eigenvectors for pentadiagonal matrices. Rimas 
\cite{Rimas} offered general expression for the entries of the power of
tridiagonal 2-Toeplitz matrix, in terms of the Chebyshev polynomials of the
second kind. Obvious formulas for the determinants of a band symmetric
Toeplitz matrix is restated in \cite{Elouafi}. \'{A}lvarez-Nodarse et al. 
\cite{Petronilho} gained the general expressions for the eigenvalues,
eigenvectors and the spectral measure of 2 and 3-Toeplitz matrices. The
powers of even order symmetric pentadiagonal matrices are calculated in \cite%
{Bozkurt}. \"{O}tele\c{s} and Akbulak \cite{Akbulak} viewed powers of
tridiagonal matrices. Wu \cite{Wu} calculated the powers of Toeplitz
Matrices. The powers of complex pentadiagonal Toeplitz matrices are computed
in \cite{Duru}.

This paper is organized as follows: the first section, motivated by \cite%
{Ahmed}, we apply $K_{n}$ pentadiagonal $2$-Toeplitz matrix to the
characteristic polynomial and eigenvectors of this matrix in given \cite%
{Ahmed}. In Section $2$, we obtain the eigenvalues and eigenvectors of $%
K_{n} $ pentadiagonal $2$-Toeplitz matrix. In Section $3$, the $s$th power
of pentadiagonal $2$-Toeplitz matrix we will get by using the expression $%
K_{n}^{s}=L_{n}J_{n}^{s}L_{n}^{-1}$ \cite{P.Horn}, where $J_{n}$ is the
Jordan's form of $K_{n}$ and $L_{n}$ is the transforming matrix. In Section $%
4$, some numerical examples are given.

Consider the polynomial sequence $\left\{A_{i}\right\}_{i\geq 0}$ and $%
\left\{B_{i}\right\}_{i\geq 0}$ characterized by a three-term recurrence
relation

\begin{equation}
\left. 
\begin{array}{lll}
xA_{0}(x) & = & a_{1}A_{0}(x)+b_{1}A_{2}(x) \\ 
xA_{1}(x) & = & a_{2}A_{1}(x)+b_{2}A_{3}(x) \\ 
xA_{i-1}(x) & = & c_{1}A_{i-3}(x)+a_{1}A_{i-1}(x)+b_{1}A_{i+1}(x) \; for
\,i\geq 3\, and \,i=2t+1\,(t\in\mathbb{N}) \\ 
xA_{i-1}(x) & = & c_{2}A_{i-3}(x)+a_{2}A_{i-1}(x)+b_{2}A_{i+1}(x) \; for
\,i\geq 3\, and \,i=2t \,(t\in\mathbb{N})%
\end{array}
\right.  \label{1}
\end{equation}
with initial conditions $A_{0}(x)=0$ and $A_{1}(x)=1$, and

\begin{equation}
\left. 
\begin{array}{lll}
xB_{0}(x) & = & a_{1}B_{0}(x)+b_{1}B_{2}(x) \\ 
xB_{1}(x) & = & a_{2}B_{1}(x)+b_{2}B_{3}(x) \\ 
xB_{i-1}(x) & = & c_{1}B_{i-3}(x)+a_{1}B_{i-1}(x)+b_{1}B_{i+1}(x)\;for\,i%
\geq 3\,and\,i=2t+1\,(t\in \mathbb{N}) \\ 
xB_{i-1}(x) & = & c_{2}B_{i-3}(x)+a_{2}B_{i-1}(x)+b_{2}B_{i+1}(x)\;for\,i%
\geq 3\,and\,i=2t\,(t\in \mathbb{N})%
\end{array}%
\right.  \label{2}
\end{equation}%
with initial conditions $B_{0}(x)=1$ and $B_{1}(x)=0$, here $a_{1},a_{2}\in 
\mathbb{C}$ and $b_{1},b_{2},c_{1},c_{2}\in \mathbb{C}\setminus \left\{
0\right\} $. We can write a matrix form to this three-term recurrence
relations

\begin{equation}
\left. 
\begin{array}{lll}
xA_{n-1}(x) & = & K_{n}A_{n-1}(x)+A_{n}(x)d_{n-1}+A_{n+1}(x)d_{n} \\ 
xB_{n-1}(x) & = & K_{n}B_{n-1}(x)+B_{n}(x)d_{n-1}+B_{n+1}(x)d_{n}%
\end{array}%
\right.  \label{3}
\end{equation}%
where $A_{n-1}(x)=\left[ A_{0}(x),A_{1}(x),A_{2}(x),\ldots ,A_{n-1}(x)\right]
^{T}$, \newline
$B_{n-1}(x)=\left[ B_{0}(x),B_{1}(x),B_{2}(x),\ldots ,B_{n-1}(x)\right] ^{T}$%
, 
\begin{equation*}
\left. 
\begin{array}{lll}
d_{n-1} & = & \left[ 0,0,0,\ldots ,0,b_{2},0\right] ^{T} \\ 
d_{n} & = & \left[ 0,0,0,\ldots ,0,0,b_{1}\right] ^{T}%
\end{array}%
\right\} \ (n=2t+1,\,t\in \mathbb{N})
\end{equation*}%
and 
\begin{equation*}
\left. 
\begin{array}{lll}
d_{n-1} & = & \left[ 0,0,0,\ldots ,0,b_{1},0\right] ^{T} \\ 
d_{n} & = & \left[ 0,0,0,\ldots ,0,0,b_{2}\right] ^{T}%
\end{array}%
\right\} \ (n=2t,\,t\in \mathbb{N}).
\end{equation*}%
Let 
\begin{equation}
K_{n}=\left[ 
\begin{array}{cccccccccc}
a_{1} & 0 & b_{1} & 0 & 0 & 0 & \cdots & 0 & 0 & 0 \\ 
0 & a_{2} & 0 & b_{2} & 0 & 0 & \cdots & 0 & 0 & 0 \\ 
c_{1} & 0 & a_{1} & 0 & b_{1} & 0 & \cdots & 0 & 0 & 0 \\ 
0 & c_{2} & 0 & a_{2} & 0 & b_{2} & \cdots & 0 & 0 & 0 \\ 
0 & 0 & c_{1} & 0 & a_{1} & 0 & \cdots & 0 & 0 & 0 \\ 
\vdots & \vdots & \vdots & \vdots & \vdots & \vdots & \ddots & \vdots & 
\vdots & \vdots \\ 
0 & 0 & 0 & 0 & 0 & 0 & \cdots & 0 & b_{2} & 0 \\ 
0 & 0 & 0 & 0 & 0 & 0 & \cdots & a_{1} & 0 & b_{1} \\ 
0 & 0 & 0 & 0 & 0 & 0 & \cdots & 0 & a_{2} & 0 \\ 
0 & 0 & 0 & 0 & 0 & 0 & \cdots & c_{1} & 0 & a_{1} \\ 
&  &  &  &  &  &  &  &  & 
\end{array}%
\right] \,for\,n=2t+1\,(t\in \mathbb{N})  \label{4}
\end{equation}%
and 
\begin{equation}
K_{n}=\left[ 
\begin{array}{cccccccccc}
a_{1} & 0 & b_{1} & 0 & 0 & 0 & \cdots & 0 & 0 & 0 \\ 
0 & a_{2} & 0 & b_{2} & 0 & 0 & \cdots & 0 & 0 & 0 \\ 
c_{1} & 0 & a_{1} & 0 & b_{1} & 0 & \cdots & 0 & 0 & 0 \\ 
0 & c_{2} & 0 & a_{2} & 0 & b_{2} & \cdots & 0 & 0 & 0 \\ 
0 & 0 & c_{1} & 0 & a_{1} & 0 & \cdots & 0 & 0 & 0 \\ 
\vdots & \vdots & \vdots & \vdots & \vdots & \vdots & \ddots & \vdots & 
\vdots & \vdots \\ 
0 & 0 & 0 & 0 & 0 & 0 & \cdots & 0 & b_{1} & 0 \\ 
0 & 0 & 0 & 0 & 0 & 0 & \cdots & a_{2} & 0 & b_{2} \\ 
0 & 0 & 0 & 0 & 0 & 0 & \cdots & 0 & a_{1} & 0 \\ 
0 & 0 & 0 & 0 & 0 & 0 & \cdots & c_{2} & 0 & a_{2} \\ 
&  &  &  &  &  &  &  &  & 
\end{array}%
\right] \,for\,n=2t\,(t\in \mathbb{N})  \label{5}
\end{equation}%
here $a_{1},a_{2}\in \mathbb{C}$ and $b_{1},b_{2},c_{1},c_{2}\in \mathbb{C}%
\setminus \left\{ 0\right\} $.

\begin{lemma}
The polynomial sequences $\left\{ A_{i}\right\} _{i\geq 0}$ and $\left\{
B_{i}\right\} _{i\geq 0}$ confirm 
\begin{equation*}
\begin{array}{lll}
deg(A_{2i+1}) & = & i\;\text{and\ the\ leading\ coefficient\ of}\;A_{2i+1}\;%
\text{is\ equal\ to}\;\frac{1}{b_{2}^{i}}, \\ 
deg(A_{2i}) & = & 0, \\ 
deg(B_{2i+1}) & = & 0, \\ 
deg(B_{2i}) & = & i\;\text{and\ the\ leading\ coefficient\ of\ }B_{2i}\;%
\text{is\ equal\ to\ }\frac{1}{b_{1}^{i}}.%
\end{array}%
\end{equation*}
\end{lemma}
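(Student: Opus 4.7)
The plan is to prove all four statements simultaneously by strong induction on the index, exploiting the fact that the recurrences (\ref{1}) and (\ref{2}) split cleanly into two interleaved subsequences, one indexed by even subscripts and one by odd.

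First I would dispatch the base cases by direct computation. From $A_{0}=0$ and the first line of (\ref{1}), $b_{1}A_{2}=0$ so $A_{2}=0$; from $A_{1}=1$ and the second line, $A_{3}=(x-a_{2})/b_{2}$, which is monic of degree $1$ with leading coefficient $1/b_{2}$. The symmetric manipulations applied to (\ref{2}) give $B_{2}=(x-a_{1})/b_{1}$ and $B_{3}=0$, confirming the mirror claims for small indices.

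For the inductive step I would fix $k\ge 2$, assume the claim for all indices less than $2k$, and deduce it for $2k$ and $2k+1$. The key observation is that the third recurrence in (\ref{1}) (applied with $i=2k-1$, which is odd and $\ge 3$) relates $A_{2k}$ only to the even-indexed $A_{2k-2}$ and $A_{2k-4}$, both of which vanish by induction; hence $A_{2k}=0$ and the ``degree $0$'' assertion is trivially satisfied. The fourth recurrence (applied with $i=2k$) yields
\[
b_{2}A_{2k+1}=(x-a_{2})A_{2k-1}-c_{2}A_{2k-3},
\]
where the first summand has degree $k$ with leading coefficient $1/b_{2}^{k-1}$ and the second has degree $k-2$; this immediately gives $\deg A_{2k+1}=k$ with leading coefficient $1/b_{2}^{k}$. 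The argument for $B_{2k}$ and $B_{2k+1}$ is entirely parallel: the third recurrence of (\ref{2}) (involving $b_{1},c_{1}$) propagates the degree-increasing even-indexed subsequence, and the fourth recurrence annihilates the odd-indexed one.

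I do not anticipate a genuine obstacle; this is a clean induction. The single point that requires care, and the one I would double-check, is the parity bookkeeping in (\ref{1})--(\ref{2}). The subscript-$1$ recurrence is triggered at odd $i\ge 3$ and updates $A_{i+1}$ with $i+1$ \emph{even}, while the subscript-$2$ recurrence is triggered at even $i\ge 3$ and updates $A_{i+1}$ with $i+1$ \emph{odd}. Reading this correctly is precisely what forces the leading coefficients to come out as pure powers of $b_{2}$ (for $A$) and of $b_{1}$ (for $B$), rather than as a mixture of the two.
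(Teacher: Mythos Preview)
Your proposal is correct and follows essentially the same approach as the paper: an induction on the index that splits into the even- and odd-subscripted cases, using the third and fourth lines of (\ref{1}) and (\ref{2}) to propagate the claims. Your version is somewhat more carefully worded (in particular your explicit parity check on which recurrence feeds which subsequence, and your remark that $A_{2k}=0$ makes the ``degree $0$'' assertion vacuous), but the underlying argument is identical to the paper's.
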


\begin{proof}
Let us prove by the inductive method. For the basis step, we possess For $%
i=0:\,A_{0}\left(x\right)=0,\,B_{0}\left(x\right)=1$.\newline
For $i=1:\,A_{1}\left(x\right)=1,\,B_{1}\left(x\right)=0$.\newline
For $i=2:\,A_{2}\left(x\right)=0,\,B_{2}\left(x\right)=\frac{x-a_{1}}{b_{1}}$%
.\newline
For $i=3:\,A_{3}\left(x\right)=\frac{x-a_{2}}{b_{2}},\,B_{3}\left(x\right)=0$%
.\newline
For $i=4:\,A_{4}\left(x\right)=0,\,B_{4}\left(x\right)=\frac{%
(x-a_{1})^2-b_{1}c_{1}}{b_{1}^2}$.\newline
Assuming that (\ref{1}) and (\ref{2}) are correct for $p=k\geq 3$. We will
prove it for $k=p+1$, we obtain 
\begin{equation}
\left. 
\begin{array}{l}
xA_{p-1}(x)=c_{1}A_{p-3}(x)+a_{1}A_{p-1}(x)+b_{1}A_{p+1}(x)\;for\;p\geq
3\;and\;p=2t+1\,(t\in\mathbb{N}) \\ 
xA_{p-1}(x)=c_{2}A_{p-3}(x)+a_{2}A_{p-1}(x)+b_{2}A_{p+1}(x)\;for\;p\geq
3\;and\;p=2t \,(t\in\mathbb{N}) \\ 
xB_{p-1}(x)=c_{1}B_{p-3}(x)+a_{1}B_{p-1}(x)+b_{1}B_{p+1}(x)\;for\;p\geq
3\;and\;p=2t+1\,(t\in\mathbb{N}) \\ 
xB_{p-1}(x)=c_{2}B_{p-3}(x)+a_{2}B_{p-1}(x)+b_{2}B_{p+1}(x)\;for\;p\geq
3\;and\;p=2t \,(t\in\mathbb{N}). \\ 
\end{array}%
\right.  \label{6}
\end{equation}

If $p:=2i$, then we have 
\begin{equation*}
\left. 
\begin{array}{l}
xA_{2i-1}(x)=c_{2}A_{2i-3}(x)+a_{2}A_{2i-1}(x)+b_{2}A_{2i+1}(x) \\ 
xB_{2i-1}(x)=c_{2}B_{2i-3}(x)+a_{2}B_{2i-1}(x)+b_{2}B_{2i+1}(x). \\ 
\end{array}%
\right.
\end{equation*}%
Accordingly 
\begin{equation*}
\left. 
\begin{array}{ll}
deg(A_{2i+1}(x)) & =deg(xA_{2i-1}(x)) \\ 
deg(B_{2i+1}(x)) & =0 \\ 
& 
\end{array}%
\right.
\end{equation*}%
and the leading coefficient of 
\begin{equation}
\left. 
\begin{array}{ll}
A_{2i+1} & =\frac{1}{b_{2}}(\text{\textit{leading\ coefficient\ of\ }}%
A_{2i-1}(x)), \\ 
& =\frac{1}{b_{2}}\frac{1}{(b_{2})^{i-1}}=\frac{1}{(b_{2})^{i}}.%
\end{array}%
\right.  \label{7}
\end{equation}

If $p:=2i-1$, then we write 
\begin{equation*}
\left. 
\begin{array}{l}
xA_{2i-2}(x)=c_{1}A_{2i-4}(x)+a_{1}A_{2i-2}(x)+b_{1}A_{2i}(x) \\ 
xB_{2i-2}(x)=c_{1}B_{2i-4}(x)+a_{1}B_{2i-2}(x)+b_{1}B_{2i}(x). \\ 
\end{array}%
\right.
\end{equation*}
So, 
\begin{equation*}
\left. 
\begin{array}{ll}
deg(A_{2i}(x)) & =0 \\ 
deg(B_{2i}(x)) & =deg(xB_{2i-2}(x)) \\ 
& 
\end{array}
\right.
\end{equation*}
and the leading coefficient of

\begin{equation}
\left. 
\begin{array}{ll}
B_{2i} & =\frac{1}{b_{1}}(\text{\textit{leading\ coefficient\ of\ }}%
B_{2i-2}(x)), \\ 
& =\frac{1}{b_{1}}\frac{1}{(b_{1})^{i-1}}=\frac{1}{(b_{1})^{i}}.%
\end{array}%
\right.  \label{8}
\end{equation}
\end{proof}

\begin{definition}
$K_{n}$ be $n$-square pentadiagonal $2$-Toeplitz matrix, one correlates the
sequence polynomial $P_{i}$ described by 
\begin{equation}
P_{i}=\det\left[%
\begin{array}{cc}
A_{n} & A_{i} \\ 
B_{n} & B_{i}%
\end{array}%
\right].  \label{9}
\end{equation}
\end{definition}

\begin{lemma}
Due to (\ref{1}) and (\ref{2}), we own

\begin{equation}
\left. 
\begin{array}{lll}
xP_{n-1}(x) & = & K_{n}P_{n-1}(x)+P_{n}(x)d_{n-1}+P_{n+1}(x)d_{n} \\ 
& = & K_{n}P_{n-1}(x)+P_{n+1}(x)d_{n}%
\end{array}%
\right.  \label{10}
\end{equation}%
where $P_{n-1}(x)=\left[ P_{0}(x),P_{1}(x),P_{2}(x),\ldots ,P_{n-1}(x)\right]
^{T}$, 
\begin{equation*}
\left. 
\begin{array}{lll}
d_{n-1} & = & \left[ 0,0,0,\ldots ,0,b_{2},0\right] ^{T} \\ 
d_{n} & = & \left[ 0,0,0,\ldots ,0,0,b_{1}\right] ^{T}%
\end{array}%
\right\} \ (n=2t+1,\,t\in \mathbb{N}),
\end{equation*}%
\begin{equation*}
\left. 
\begin{array}{lll}
d_{n-1} & = & \left[ 0,0,0,\ldots ,0,b_{1},0\right] ^{T} \\ 
d_{n} & = & \left[ 0,0,0,\ldots ,0,0,b_{2}\right] ^{T}%
\end{array}%
\right\} \ (n=2t,\,t\in \mathbb{N}).
\end{equation*}
\end{lemma}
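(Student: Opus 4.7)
The plan is to expand $P_i(x)$ via its definition as a $2\times 2$ determinant, then assemble the entries into the column vector $P_{n-1}(x)$ and apply the recurrences already established for $A_{n-1}(x)$ and $B_{n-1}(x)$ in (\ref{3}). Concretely, from (\ref{9}) we have the scalar identity $P_i(x)=A_n(x)B_i(x)-B_n(x)A_i(x)$, and hence the vector identity
\begin{equation*}
P_{n-1}(x)=A_n(x)\,B_{n-1}(x)-B_n(x)\,A_{n-1}(x),
\end{equation*}
where $A_n(x),B_n(x)$ are treated as scalars (they do not depend on the index running from $0$ to $n-1$) and $A_{n-1}(x),B_{n-1}(x)$ are the column vectors introduced after (\ref{3}).

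Next I would multiply the second line of (\ref{3}) (the $B$-recurrence) by $A_n(x)$, multiply the first line (the $A$-recurrence) by $B_n(x)$, and subtract. On the left this produces $x\bigl(A_n B_{n-1}-B_n A_{n-1}\bigr)=xP_{n-1}(x)$, while on the right the linearity of $K_n$ yields $K_n\bigl(A_n B_{n-1}-B_n A_{n-1}\bigr)=K_nP_{n-1}(x)$. The terms involving $d_{n-1}$ collect with coefficient $A_n(x)B_n(x)-B_n(x)A_n(x)=0$, and the terms involving $d_n$ collect with coefficient $A_n(x)B_{n+1}(x)-B_n(x)A_{n+1}(x)$, which is precisely the determinant defining $P_{n+1}(x)$ in (\ref{9}).

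The observation that makes the second form of (\ref{10}) drop out is simply that $P_n(x)=\det\!\bigl[\begin{smallmatrix}A_n&A_n\\B_n&B_n\end{smallmatrix}\bigr]=0$, so even if one writes the intermediate expression with a $P_n(x)d_{n-1}$ term it vanishes automatically. Putting the pieces together gives
\begin{equation*}
xP_{n-1}(x)=K_nP_{n-1}(x)+P_n(x)\,d_{n-1}+P_{n+1}(x)\,d_n=K_nP_{n-1}(x)+P_{n+1}(x)\,d_n,
\end{equation*}
which is exactly (\ref{10}).

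I do not expect any real obstacle here: the argument is bilinear in $(A,B)$ and the vanishing of the $d_{n-1}$-coefficient is an immediate antisymmetry. The only mild care needed is bookkeeping to make sure the scalar/vector roles of $A_n(x)$, $B_n(x)$ versus $A_{n-1}(x)$, $B_{n-1}(x)$ are kept straight, and to notice that the explicit form of $d_{n-1}$ and $d_n$ (whether $n$ is even or odd, as specified in the statement) plays no role beyond being inherited verbatim from (\ref{3}).
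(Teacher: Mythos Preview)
Your argument is correct and is exactly the natural one: write $P_i=A_nB_i-B_nA_i$ from (\ref{9}), take the $A_n$-multiple of the $B$-recurrence in (\ref{3}) minus the $B_n$-multiple of the $A$-recurrence, and read off that the $d_{n-1}$-coefficient is $A_nB_n-B_nA_n=P_n=0$ while the $d_n$-coefficient is $A_nB_{n+1}-B_nA_{n+1}=P_{n+1}$. The paper itself gives no proof of this lemma (it is stated and immediately followed by the next lemma), so there is nothing to compare against; your write-up fills the gap cleanly.
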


\begin{lemma}
The polynomial $P_{n+1}$ is degree $n$ and the leading coefficient of $%
P_{n+1}$ is 
\begin{equation}
\begin{array}{lll}
-\frac{1}{(b_{1}b_{2})^i} & , & \;if\;n=2t\,(t\in\mathbb{N}) \\ 
\frac{1}{b_{1}^{i+1}b_{2}^i} & , & \;if\;n=2t+1\,(t\in\mathbb{N}).%
\end{array}
\label{11}
\end{equation}
\end{lemma}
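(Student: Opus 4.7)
The plan is to expand the $2\times 2$ determinant in (\ref{9}) to write $P_{n+1} = A_n B_{n+1} - A_{n+1} B_n$, and then read off degrees and leading coefficients term by term directly from Lemma 1, splitting into cases according to the parity of $n$. No new recursion needs to be set up; everything follows from information already proven.

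Suppose first $n=2t$. Lemma 1 gives $\deg A_{2t} = 0$ and $\deg B_{2t+1} = 0$; in fact the base cases computed in the proof of Lemma 1, together with the recurrences (\ref{1})--(\ref{2}), show $A_{2t}\equiv 0$ and $B_{2t+1}\equiv 0$, so the product $A_n B_{n+1}$ vanishes and $P_{n+1} = -A_{2t+1}B_{2t}$. Lemma 1 says $A_{2t+1}$ has degree $t$ with leading coefficient $1/b_2^{t}$ and $B_{2t}$ has degree $t$ with leading coefficient $1/b_1^{t}$; multiplying, $P_{n+1}$ has degree $2t = n$ with leading coefficient $-1/(b_1 b_2)^t$, which is (\ref{11}) under the identification $i=t$.

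Next, suppose $n=2t+1$. The symmetric argument shows $A_{2t+2}\equiv 0$ and $B_{2t+1}\equiv 0$, hence $P_{n+1} = A_{2t+1} B_{2t+2}$. From Lemma 1 the degrees $t$ and $t+1$ and the leading coefficients $1/b_2^{t}$ and $1/b_1^{t+1}$ combine to give $\deg P_{n+1}=2t+1=n$ with leading coefficient $1/(b_1^{t+1}b_2^{t})$, matching (\ref{11}) again with $i=t$.

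The only delicate point I anticipate is checking that the terms which were discarded as ``lower degree'' really cannot rescue or spoil the leading coefficient. This is where $b_1,b_2\in\mathbb{C}\setminus\{0\}$ is used: the computed leading coefficients are nonzero, and the complementary term in the determinant expansion is a constant (degree $0$) while the dominant term has degree $n\geq 1$, so no cancellation at the top degree is possible. Beyond that, the proof is just clean case bookkeeping between even and odd $n$ and the correspondence $i\leftrightarrow t$.
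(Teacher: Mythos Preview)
Your proof is correct and follows essentially the same approach as the paper: expand the determinant $P_{n+1}=A_nB_{n+1}-A_{n+1}B_n$, split on the parity of $n$, and read off the degree and leading coefficient from Lemma~1. Your version is slightly more careful in observing that the even-indexed $A$'s and odd-indexed $B$'s are identically zero (not merely of ``degree $0$'' as Lemma~1 states), which cleanly justifies discarding one of the two terms.
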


\begin{proof}
If $n=2i$ 
\begin{equation*}
\begin{array}{ll}
P_{2i+1} & =\det\left[%
\begin{array}{cc}
A_{2i} & A_{2i+1} \\ 
B_{2i} & B_{2i+1}%
\end{array}%
\right]=A_{2i}B_{2i+1}-A_{2i+1}B_{2i}%
\end{array}%
\end{equation*}
and using Lemma 1 
\begin{equation*}
deg(P_{2i+1})=deg(A_{2i+1}B_{2i})
\end{equation*}
and the leading coefficient of 
\begin{equation*}
P_{2i+1}=-\frac{1}{b_{2}^i}\frac{1}{b_{1}^i}=-\frac{1}{(b_{1}b_{2})^i}.
\end{equation*}

If $n=2i+1$ 
\begin{equation*}
\begin{array}{ll}
P_{2i+1} & =\det\left[%
\begin{array}{cc}
A_{2i+1} & A_{2i+2} \\ 
B_{2i+1} & B_{2i+2}%
\end{array}%
\right]=A_{2i+1}B_{2i+2}-A_{2i+2}B_{2i+1}%
\end{array}%
\end{equation*}
and using Lemma 1 
\begin{equation*}
deg(P_{2i+2})=deg(A_{2i+1}B_{2i+2})
\end{equation*}
and the leading coefficient of 
\begin{equation*}
P_{2i+2}=\frac{1}{b_{2}^i}\frac{1}{b_{1}^{i+1}}.
\end{equation*}
\end{proof}

\begin{lemma}
If $\alpha $ is a zero of the polynomial $P_{n+1}$ then $\alpha $ is an
eigenvalues of the matrix $K_{n}$.
\end{lemma}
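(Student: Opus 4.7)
The plan is to read the vector recurrence (\ref{10}) of Lemma 4 as an eigenvalue identity. Substituting $x = \alpha$ into the simplified form
\[
xP_{n-1}(x) = K_{n}P_{n-1}(x) + P_{n+1}(x)d_{n}
\]
and invoking the hypothesis $P_{n+1}(\alpha) = 0$ kills the inhomogeneous term, leaving
\[
K_{n}P_{n-1}(\alpha) = \alpha P_{n-1}(\alpha).
\]
Hence the column vector $P_{n-1}(\alpha) = [P_0(\alpha),\ldots,P_{n-1}(\alpha)]^T$ lies in the kernel of $\alpha I - K_n$, and it will serve as an eigenvector of $K_n$ for the eigenvalue $\alpha$ as soon as we verify that it is not the zero vector.

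To rule out vanishing, I would inspect the first two coordinates via the determinantal definition (\ref{9}) combined with the initial data $A_0 = 0$, $A_1 = 1$, $B_0 = 1$, $B_1 = 0$, which gives
\[
P_0(x) = A_n(x), \qquad P_1(x) = -B_n(x).
\]
By Lemma 1 exactly one of $A_n$, $B_n$ vanishes identically (the parity of $n$ selecting which) while the other is a polynomial of positive degree in $x$, so outside of a degenerate situation at least one of $A_n(\alpha)$, $B_n(\alpha)$ is nonzero and the conclusion $P_{n-1}(\alpha) \ne 0$ is immediate.

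The main obstacle will be the degenerate case in which the surviving member of the pair $\{A_n, B_n\}$ also vanishes at $\alpha$. I would handle it by exploiting the fact that the permutation gathering the odd- and even-indexed coordinates conjugates $K_n$ into a block-diagonal matrix built from two tridiagonal Toeplitz sub-blocks; under this identification $P_{n+1}$ factors as a product of the two sub-block characteristic polynomials (concretely $P_{n+1} = A_n B_{n+1}$ when $n$ is odd and $P_{n+1} = -A_{n+1} B_n$ when $n$ is even, using Lemma 1), so any zero of $P_{n+1}$ is automatically an eigenvalue of one of the sub-blocks, and therefore of $K_n$. Everything else reduces to the direct substitution carried out in the first paragraph.
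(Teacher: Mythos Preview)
Your opening step---substituting $x=\alpha$ into the relation of Lemma~3 and reading off $K_nP_{n-1}(\alpha)=\alpha P_{n-1}(\alpha)$---is exactly what the paper does. Your computation $P_0=A_n$, $P_1=-B_n$ from the initial data is also correct and is the content of the paper's Cases~I and~II (modulo what appear to be typos there).

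Where you and the paper diverge is in the degenerate situation, when the surviving one of $A_n(\alpha)$, $B_n(\alpha)$ happens to vanish too. The paper stays at the level of the recurrences: in Cases~III and~IV it builds an alternative candidate eigenvector $F_{n-1}(\alpha)$ out of the \emph{other} polynomial family (e.g.\ $A_{n-1}(\alpha)B_n(\alpha)$-type scalings of the vectors $A_{n-1}(\alpha)$ or $B_{n-1}(\alpha)$), checks that this $F_{n-1}(\alpha)$ also satisfies the eigenvalue identity, and verifies it is nonzero. Your route instead passes through the structural observation that a permutation of coordinates block-diagonalises $K_n$ into two tridiagonal Toeplitz blocks, together with the factorisations $P_{n+1}=-A_{n+1}B_n$ (for $n$ even) and $P_{n+1}=A_nB_{n+1}$ (for $n$ odd), so that any zero of $P_{n+1}$ is automatically an eigenvalue of one of the blocks. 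This is a genuinely different and somewhat more conceptual argument; it in effect pre-empts Theorem~6 of the paper, which records precisely that factorisation of the characteristic polynomial. The paper's approach has the small advantage of producing an explicit eigenvector in every case without appealing to the block structure, while yours makes transparent \emph{why} the degenerate case causes no trouble.
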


\begin{proof}
Let $\alpha$ is a zero of the polynomial $P_{n+1}$, from equation (\ref{10}%
), we acquire 
\begin{equation*}
K_{n}P_{n-1}(\alpha)=\alpha P_{n-1}(\alpha).
\end{equation*}
There are four cases to be noted.\newline

Case I. Suppose for $n=2t\,(t\in\mathbb{N})$ either $A_{n+1}(\alpha)\neq 0$
or $B_{n}(\alpha)\neq 0$. In that case $P_{0}(\alpha)=A_{n+1}(\alpha)$ and $%
P_{1}(\alpha)=-B_{n}(\alpha)$, then $P_{n-1}(\alpha)$ is a corresponding
non-null eigenvector of $K_{n}$, we acquire that $\alpha$ is an eigenvalue
of the matrix $K_{n}$.\newline

Case II. Suppose for $n=2t-1\,(t\in\mathbb{N})$ either $A_{n}(\alpha)\neq 0$
or $B_{n+1}(\alpha)\neq 0$.\newline
In that case $P_{0}(\alpha)=A_{n}(\alpha)$ and $P_{1}(\alpha)=-B_{n+1}(%
\alpha)$, then $P_{n-1}(\alpha)$ is a corresponding non-null eigenvector of $%
K_{n}$, we acquire that $\alpha$ is an eigenvalue of the matrix $K_{n}$.%
\newline

Case III. Suppose for $n=2t\,(t\in\mathbb{N})$, $A_{n}(\alpha)=B_{n+1}(%
\alpha)=0$ and $B_{n}(\alpha)\neq 0$. Let $F_{n-1}(\alpha)=-A_{n-1}(%
\alpha)B_{n}(\alpha)$, we own $K_{n}F_{n-1}(\alpha)=\alpha F_{n-1}(\alpha)$,
then $F_{n-1}(\alpha)$ is a corresponding non-null eigenvector of $K_{n}$,
we acquire that $\alpha$ is an eigenvalue of the matrix $K_{n}$.\newline

Case IV. Suppose for $n=2t+1\,(t\in\mathbb{N})$, $A_{n-1}(\alpha)=B_{n}(%
\alpha)=0$ and $A_{n}(\alpha)\neq 0$. Let $F_{n-1}(\alpha)=A_{n}(%
\alpha)B_{n-1}(\alpha)$, we own $K_{n}F_{n-1}(\alpha)=\alpha F_{n-1}(\alpha)$%
, then $F_{n-1}(\alpha)$ is a corresponding non-null eigenvector of $K_{n}$,
we acquire that $\alpha$ is an eigenvalue of the matrix $K_{n}$.\newline
\end{proof}

\begin{theorem}
Let $K_{n}$ be $n$-square pentadiagonal $2$-Toeplitz matrix and the
corresponding polynomial $P_{n+1}$ in the eq. (\ref{10}). Suppose that $%
P_{n+1}$ has simple zeros, the characteristic polynomial of $K_{n}$ is
completely\newline
\begin{equation}
\begin{array}{ll}
\left\vert xI_{n}-K_{n}\right\vert =(b_{1}b_{2})^{\frac{n}{2}}A_{n+1}B_{n},
& if\;n=2t\,(t\in \mathbb{N}) \\ 
\left\vert xI_{n}-K_{n}\right\vert =b_{1}^{\frac{n+1}{2}}b_{2}^{\frac{n-1}{2}%
}A_{n}B_{n+1}, & if\;n=2t+1\,(t\in \mathbb{N})%
\end{array}
\label{12}
\end{equation}%
here $I_{n}$ is the $n-$square identity matrix.
\end{theorem}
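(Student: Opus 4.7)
The plan is to recognize both sides as monic polynomials of degree $n$ with the same roots, so that they must coincide. Since the characteristic polynomial $|xI_n-K_n|$ is monic of degree $n$, and Lemma 3 says $P_{n+1}$ also has degree $n$, it suffices to identify their zero sets and match leading coefficients.

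First I would sharpen Lemma 1 to the statement that $A_{2t}\equiv 0$ and $B_{2t+1}\equiv 0$ as polynomials, not merely that their degrees are zero. This is immediate from (\ref{1}) and (\ref{2}): the initial condition $A_0=0$ together with the first line of (\ref{1}) gives $b_1A_2=0$, so $A_2=0$; the odd-index recurrence (third line) then propagates the vanishing inductively to all $A_{2t}$, and a symmetric argument handles $B_{2t+1}$. Substituting into the determinant (\ref{9}) defining $P_i$, exactly one of the two terms of $P_{n+1}=A_nB_{n+1}-A_{n+1}B_n$ is annihilated according to the parity of $n$, leaving
\[
P_{n+1}=\begin{cases}-A_{n+1}B_n,&n=2t,\\[2pt] A_nB_{n+1},&n=2t+1.\end{cases}
\]

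Next I would use Lemma 4 together with the simple-zero hypothesis. By Lemma 4 every zero of $P_{n+1}$ is an eigenvalue of $K_n$; the hypothesis provides $n$ distinct such zeros, which must therefore exhaust the spectrum of the $n\times n$ matrix $K_n$, each with multiplicity one. Hence $|xI_n-K_n|$ and $P_{n+1}$ share the same zero set with the same simple multiplicities and agree up to a scalar. Reading off the leading coefficient of $P_{n+1}$ from Lemma 3, which is $-1/(b_1b_2)^{n/2}$ when $n=2t$ and $1/(b_1^{(n+1)/2}b_2^{(n-1)/2})$ when $n=2t+1$, and dividing to make the polynomial monic, then substituting the single-product simplification above, yields exactly the two formulas in (\ref{12}).

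The main obstacle is the first step. Lemma 1 as written only records the degrees of $A_{2t}$ and $B_{2t+1}$, whereas the clean single-product form of the characteristic polynomial requires the stronger identical vanishing $A_{2t}\equiv B_{2t+1}\equiv 0$. Without this, $P_{n+1}$ would carry a leftover constant contribution from the second term of the determinant and the right-hand sides of (\ref{12}) would differ from $|xI_n-K_n|$ by that constant. Once the vanishing is established the rest is a dimension count on the spectrum together with routine leading-coefficient bookkeeping.
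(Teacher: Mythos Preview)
Your argument is correct and follows the same route as the paper: invoke the eigenvalue lemma to identify the simple zeros of $P_{n+1}$ with the full spectrum of $K_n$, then normalize by the leading coefficient. The paper's proof is in fact much terser than yours---it consists only of the dimension count---and leaves implicit the step you single out as the main obstacle, namely that $A_{2t}\equiv 0$ and $B_{2t+1}\equiv 0$ identically (not merely of degree zero), which is needed to collapse $P_{n+1}=A_nB_{n+1}-A_{n+1}B_n$ to a single product; your supplying this is a genuine improvement. One bookkeeping slip: because the paper numbers Definition~2 in the same counter, what you call Lemma~3 and Lemma~4 are the paper's Lemma~4 and Lemma~5 respectively.
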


\begin{proof}
Let $\alpha _{1},\ldots ,\alpha _{n}$ the zeros of characteristic polynomial
of $K_{n}$. Lemma $5$, $\alpha _{1},\ldots ,\alpha _{n}$ are the eigenvalues
of the matrix $K_{n}$.
\end{proof}

\section{Eigenvalues and eigenvectors of $K_{n}$}

\begin{theorem}
Let $K_{n}$ be $n$-square $(n=2t,\,t\in\mathbb{N})$ pentadiagonal $2$%
-Toeplitz matrix as in (\ref{5}). Then the eigenvalues and eigenvectors of
the matrix $K_{n}$ are

\begin{equation}
\alpha _{k}=\left\{ 
\begin{array}{l}
a_{1}-2\sqrt{b_{1}c_{1}}cos\left( \frac{(k+1)\pi }{n+2}\right)
,\;(k=2t+1\,,\,t\in \mathbb{N}) \\ 
a_{2}-2\sqrt{b_{2}c_{2}}cos\left( \frac{k\pi }{n+2}\right) ,\;\ \ \ \
(k=2t\,,\,t\in \mathbb{N})%
\end{array}%
\right.  \label{13}
\end{equation}%
and 
\begin{equation}
\left[ 
\begin{array}{c}
B_{0}\left( \alpha _{j}\right) \\ 
B_{1}\left( \alpha _{j}\right) \\ 
B_{2}\left( \alpha _{j}\right) \\ 
\vdots \\ 
B_{n-2}\left( \alpha _{j}\right) \\ 
B_{n-1}\left( \alpha _{j}\right)%
\end{array}%
\right] \ \ (j=1,3,5,\ldots ,n-3,n-1);  \label{14}
\end{equation}%
and 
\begin{equation}
\left[ 
\begin{array}{c}
A_{0}\left( \alpha _{j}\right) \\ 
A_{1}\left( \alpha _{j}\right) \\ 
A_{2}\left( \alpha _{j}\right) \\ 
\vdots \\ 
A_{n-2}\left( \alpha _{j}\right) \\ 
A_{n-1}\left( \alpha _{j}\right)%
\end{array}%
\right] \ \ (j=2,4,6,\ldots ,n-2,n).  \label{15}
\end{equation}
\end{theorem}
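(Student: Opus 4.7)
The strategy is to combine Theorem~6 (the characteristic polynomial factors as $(b_1b_2)^{n/2}A_{n+1}B_n$) with a reduction of each of the subsequences $\{A_{2t+1}\}$ and $\{B_{2t}\}$ to Chebyshev polynomials of the second kind, after which the vector recurrence~(\ref{3}) certifies the eigenvectors almost for free. Since $\deg A_{n+1}=\deg B_n=n/2$ by Lemma~1, producing the two lists of $n/2$ zeros in closed form will exhaust the spectrum.

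Step~1 is the Chebyshev reduction for $A_{n+1}$. From the recurrences in~(\ref{1}), applied with $i=2t$, the odd-indexed entries satisfy
\[
 b_2\,A_{2t+1}(x)=(x-a_2)A_{2t-1}(x)-c_2\,A_{2t-3}(x),
\]
with $A_1=1$ and $A_3=(x-a_2)/b_2$. Writing $\tilde A_t:=A_{2t+1}$ and making the substitution $x-a_2=2\sqrt{b_2c_2}\cos\theta$, the rescaling $\tilde A_t=(c_2/b_2)^{t/2}V_t$ turns the three-term relation into $V_{t+1}=2(\cos\theta)V_t-V_{t-1}$, and the initial data $V_0=1$, $V_1=2\cos\theta$ match $U_0,U_1$. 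Hence $\tilde A_t(x)=(c_2/b_2)^{t/2}U_t\!\bigl((x-a_2)/(2\sqrt{b_2c_2})\bigr)$, whose zeros are $x=a_2+2\sqrt{b_2c_2}\cos(k\pi/(t+1))$ for $k=1,\dots,t$. Reindexing via $k\mapsto t+1-k$ and substituting $t=n/2$ yields exactly the claimed formula $\alpha_k=a_2-2\sqrt{b_2c_2}\cos(k\pi/(n+2))$ for the even values $k=2,4,\dots,n$. An identical reduction on the even-indexed $B$'s, using $b_1B_{2t+2}=(x-a_1)B_{2t}-c_1B_{2t-2}$ with $B_0=1$ and $B_2=(x-a_1)/b_1$, gives $B_{2t}(x)=(c_1/b_1)^{t/2}U_t\!\bigl((x-a_1)/(2\sqrt{b_1c_1})\bigr)$ and produces the odd-indexed eigenvalues $\alpha_k=a_1-2\sqrt{b_1c_1}\cos((k+1)\pi/(n+2))$, $k=1,3,\dots,n-1$.

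Step~2 verifies the eigenvectors directly from~(\ref{3}). Evaluating the $A$-version of~(\ref{3}) at $x=\alpha_j$ gives
\[
 \alpha_j A_{n-1}(\alpha_j)=K_nA_{n-1}(\alpha_j)+A_n(\alpha_j)d_{n-1}+A_{n+1}(\alpha_j)d_n.
\]
For $n=2t$, Lemma~1 gives $A_n=A_{2t}\equiv0$, and for $j$ even $\alpha_j$ is a zero of $A_{n+1}$, so the last two terms vanish and the column~(\ref{15}) is an eigenvector for $\alpha_j$; it is nonzero because its second entry is $A_1\equiv1$. The same argument applied to the $B$-version of~(\ref{3}), using $B_{n+1}=B_{2t+1}\equiv0$ together with $B_n(\alpha_j)=0$ for $j$ odd, shows that~(\ref{14}) is a nonzero eigenvector (its first entry is $B_0\equiv1$).

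The only non-bookkeeping step is the Chebyshev reduction in Step~1: carrying the rescaling $(c_2/b_2)^{t/2}$ (and its $B$-analogue $(c_1/b_1)^{t/2}$) through the substitution $x-a_i=2\sqrt{b_ic_i}\cos\theta$ and checking that the normalized initial conditions $V_0,V_1$ coincide with $U_0,U_1$. This is where the leading-coefficient identities from Lemma~1 serve as a useful cross-check. Once that identification is in place, the zeros of $U_t$ are classical and the conclusion follows by matching indices with the statement of the theorem.
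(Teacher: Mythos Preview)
Your proof is correct and follows essentially the same route as the paper: both reduce the recurrences for $\{A_{2t+1}\}$ and $\{B_{2t}\}$ to Chebyshev polynomials of the second kind (your formulas $A_{2t+1}=(c_2/b_2)^{t/2}U_t$ and $B_{2t}=(c_1/b_1)^{t/2}U_t$ are equivalent to the paper's (\ref{151})--(\ref{152}) after multiplying through by $b_2^{t}$, $b_1^{t}$), and then read off the eigenvalues from the known zeros of $U_{n/2}$. The only difference is that for the eigenvectors the paper simply cites Hadj--Elouafi~\cite{Ahmed}, whereas you give the direct self-contained verification from~(\ref{3}) using $A_{2t}\equiv 0$, $B_{2t+1}\equiv 0$; your argument is cleaner and makes the nonvanishing of the eigenvectors explicit via $A_1\equiv 1$ and $B_0\equiv 1$.
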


\begin{proof}
We obtain for 
\begin{equation}
b_{2}^{\frac{n}{2}}A_{n+1}(x)=(b_{2}c_{2})^{\frac{n}{4}}U_{\frac{n}{2}%
}\left( \frac{x-a_{2}}{2\sqrt{b_{2}c_{2}}}\right)  \label{151}
\end{equation}%
and 
\begin{equation}
b_{1}^{\frac{n}{2}}B_{n}(x)=(b_{1}c_{1})^{\frac{n}{4}}U_{\frac{n}{2}}\left( 
\frac{x-a_{1}}{2\sqrt{b_{1}c_{1}}}\right)  \label{152}
\end{equation}%
from the recurrence relations (\ref{1}) and (\ref{2}), here $n=2t\,(t\in 
\mathbb{N})$ and $U_{n}(.)$ is the $n$th degree Chebyshev polynomial of the
second kind \cite{Mason}: 
\begin{equation*}
U_{n}(x)=\frac{sin((n+1)arccosx)}{sin(arccosx)}
\end{equation*}%
All the roots of $U_{n}(x)$ are included in the interval $[-1,1]$. Due to (%
\ref{12}), (\ref{151}) and (\ref{152}), we have 
\begin{equation}
\begin{array}{ccc}
\left\vert xI_{n}-K_{n}\right\vert & = & (b_{1}c_{1})^{\frac{n}{4}}U_{\frac{n%
}{2}}\left( \frac{x-a_{1}}{2\sqrt{b_{1}c_{1}}}\right) (b_{2}c_{2})^{\frac{n}{%
4}}U_{\frac{n}{2}}\left( \frac{x-a_{2}}{2\sqrt{b_{2}c_{2}}}\right) \\ 
& = & (b_{1}c_{1}b_{2}c_{2})^{\frac{n}{4}}U_{\frac{n}{2}}\left( \frac{x-a_{1}%
}{2\sqrt{b_{1}c_{1}}}\right) U_{\frac{n}{2}}\left( \frac{x-a_{2}}{2\sqrt{%
b_{2}c_{2}}}\right) .%
\end{array}
\label{153}
\end{equation}%
The eigenvalues of $K_{n}$ obtained as 
\begin{equation*}
\alpha _{k}=\left\{ 
\begin{array}{l}
a_{1}-2\sqrt{b_{1}c_{1}}cos\left( \frac{(k+1)\pi }{n+2}\right)
,\;(k=2t+1\,,\,t\in \mathbb{N}) \\ 
a_{2}-2\sqrt{b_{2}c_{2}}cos\left( \frac{k\pi }{n+2}\right) ,\;\ \ \ \
(k=2t\,,\,t\in \mathbb{N})%
\end{array}%
\right.
\end{equation*}%
from (\ref{153}). In \cite{Ahmed} Hadj and Elouafi proved eigenvectors of a
pentadiagonal matrix. Based on Theorem 6 and \cite{Ahmed}, we can calculate
eigenvectors of the matrix $K_{n}$
\end{proof}

\section{The integer powers of the matrix $K_{n}$}

Considering (\ref{14}) and (\ref{15}), we write down the transforming
matrices $L_{n}$\linebreak $(n=2t,\;t\in \mathbb{N})$ as following: 
\begin{equation*}
L_{n}=\left[ 
\begin{array}{cccc}
B_{0}\left( \alpha _{1}\right) & A_{0}\left( \alpha _{2}\right) & 
B_{0}\left( \alpha _{3}\right) & A_{0}\left( \alpha _{4}\right) \\ 
B_{1}\left( \alpha _{1}\right) & A_{1}\left( \alpha _{2}\right) & 
B_{1}\left( \alpha _{3}\right) & A_{1}\left( \alpha _{4}\right) \\ 
B_{2}\left( \alpha _{1}\right) & A_{2}\left( \alpha _{2}\right) & 
B_{2}\left( \alpha _{3}\right) & A_{2}\left( \alpha _{4}\right) \\ 
\vdots & \vdots & \vdots & \vdots \\ 
B_{n-3}\left( \alpha _{1}\right) & A_{n-3}\left( \alpha _{2}\right) & 
B_{n-3}\left( \alpha _{3}\right) & A_{n-3}\left( \alpha _{4}\right) \\ 
B_{n-2}\left( \alpha _{1}\right) & A_{n-2}\left( \alpha _{2}\right) & 
B_{n-2}\left( \alpha _{3}\right) & A_{n-2}\left( \alpha _{4}\right) \\ 
B_{n-1}\left( \alpha _{1}\right) & A_{n-1}\left( \alpha _{2}\right) & 
B_{n-1}\left( \alpha _{3}\right) & A_{n-1}\left( \alpha _{4}\right) \\ 
&  &  & 
\end{array}%
\right.
\end{equation*}
\begin{equation}
\left. 
\begin{array}{cccc}
\qquad \qquad \qquad \qquad \qquad \qquad \qquad & \cdots & B_{0}\left(
\alpha _{n-1}\right) & A_{0}\left( \alpha _{n}\right) \\ 
\qquad \qquad \qquad \qquad \qquad \qquad \qquad & \cdots & B_{1}\left(
\alpha _{n-1}\right) & A_{1}\left( \alpha _{n}\right) \\ 
\qquad \qquad \qquad \qquad \qquad \qquad \qquad & \cdots & B_{2}\left(
\alpha _{n-1}\right) & A_{2}\left( \alpha _{n}\right) \\ 
\qquad \qquad \qquad \qquad \qquad \qquad \qquad & \ddots & \vdots & \vdots
\\ 
\qquad \qquad \qquad \qquad \qquad \qquad \qquad & \cdots & B_{n-3}\left(
\alpha _{n-1}\right) & A_{n-3}\left( \alpha_{n}\right) \\ 
\qquad \qquad \qquad \qquad \qquad \qquad \qquad & \cdots & B_{n-2}\left(
\alpha _{n-1}\right) & A_{n-2}\left( \alpha_{n}\right) \\ 
\qquad \qquad \qquad \qquad \qquad \qquad \qquad & \cdots & B_{n-1}\left(
\alpha _{n-1}\right) & A_{n-1}\left( \alpha_{n}\right) \\ 
&  &  & 
\end{array}
\right]  \label{16}
\end{equation}%
Now, let us find the inverse matrix $L_{n}^{-1}$ of the matrix $L_{n}$. If
we denote $i$-th row of the inverse matrix $L_{n}^{-1}$ by $\mu _{i}$, then
we obtain 
\begin{equation}
\left[ 
\begin{array}{c}
q_{i}r_{1}^{l}B_{0}\left( \alpha _{i}\right) \\ 
q_{i}r_{1}^{l}B_{1}\left( \alpha _{i}\right) \\ 
q_{i}r_{1}^{l}B_{2}\left( \alpha _{i}\right) \\ 
q_{i}r_{1}^{l}B_{3}\left( \alpha _{i}\right) \\ 
q_{i}r_{1}^{l}B_{4}\left( \alpha _{i}\right) \\ 
\vdots \\ 
q_{i}r_{1}^{l}B_{n-4}\left( \alpha _{i}\right) \\ 
q_{i}r_{1}^{l}B_{n-3}\left( \alpha _{i}\right) \\ 
q_{i}r_{1}^{l}B_{n-2}\left( \alpha _{i}\right) \\ 
q_{i}r_{1}^{l}B_{n-1}\left( \alpha _{i}\right) \\ 
\end{array}%
\right] ^{T}(i=1,3,5,\ldots ,n-3,n-1);  \label{17}
\end{equation}%
and 
\begin{equation}
\left[ 
\begin{array}{c}
q_{i}r_{2}^{l}A_{0}\left( \alpha _{i}\right) \\ 
q_{i}r_{2}^{l}A_{1}\left( \alpha _{i}\right) \\ 
q_{i}r_{2}^{l}A_{2}\left( \alpha _{i}\right) \\ 
q_{i}r_{2}^{l}A_{3}\left( \alpha _{i}\right) \\ 
q_{i}r_{2}^{l}A_{4}\left( \alpha _{i}\right) \\ 
\vdots \\ 
q_{i}r_{2}^{l}A_{n-4}\left( \alpha _{i}\right) \\ 
q_{i}r_{2}^{l}A_{n-3}\left( \alpha _{i}\right) \\ 
q_{i}r_{2}^{l}A_{n-2}\left( \alpha _{i}\right) \\ 
q_{i}r_{2}^{l}A_{n-1}\left( \alpha _{i}\right) \\ 
\end{array}%
\right] ^{T}(i=2,4,6,\ldots ,n-2,n)  \label{18}
\end{equation}%
where $r_{1}=\sqrt{\frac{b_{1}}{c_{1}}},\;r_{2}=\sqrt{\frac{b_{2}}{c_{2}}}%
,\; $%
\begin{equation*}
l=\left\{ 
\begin{array}{ll}
j-1, & \;j=1,3,5,\ldots ,n-3,n-1 \\ 
j-2, & \;j=2,4,6,\ldots ,n-2,n%
\end{array}%
\right.
\end{equation*}
and

\begin{equation*}
q_{i}=\left\{ 
\begin{array}{ll}
\frac{4-\left( \frac{\alpha _{i}-a_{1}}{\sqrt{b_{1}c_{1}}}\right) ^{2}}{n+2},
& \;i=2t+1\,(t\in \mathbb{N}) \\ 
\frac{4-\left( \frac{\alpha _{i}-a_{2}}{\sqrt{b_{2}c_{2}}}\right) ^{2}}{n+2},
& \;i=2t\,(t\in \mathbb{N}).%
\end{array}%
\right.
\end{equation*}

Thus, we obtain 
\begin{equation*}
L_{n}^{-1}=\left[ 
\begin{array}{cccc}
q_{1}B_{0}\left( \alpha _{1}\right) & q_{1}B_{1}\left( \alpha _{1}\right) & 
q_{1}r_{1}^{2}B_{2}\left( \alpha _{1}\right) & q_{1}r_{1}^{2}B_{3}\left(
\alpha _{1}\right) \\ 
q_{2}A_{0}\left( \alpha _{2}\right) & q_{2}A_{1}\left( \alpha _{2}\right) & 
q_{2}r_{2}^{2}A_{2}\left( \alpha _{2}\right) & q_{2}r_{2}^{2}A_{3}\left(
\alpha _{2}\right) \\ 
q_{3}B_{0}\left( \alpha _{3}\right) & q_{3}B_{1}\left( \alpha _{3}\right) & 
q_{3}r_{1}^{2}B_{2}\left( \alpha _{3}\right) & q_{1}r_{1}^{2}B_{3}\left(
\alpha _{3}\right) \\ 
\vdots & \vdots & \vdots & \vdots \\ 
q_{n-1}B_{0}\left( \alpha _{n-1}\right) & q_{n-1}B_{1}\left( \alpha
_{n-1}\right) & q_{n-1}r_{1}^{2}B_{2}\left( \alpha _{n-1}\right) & 
q_{n-1}r_{1}^{2}B_{3}\left( \alpha _{n-1}\right) \\ 
q_{n}A_{0}\left( \alpha _{n}\right) & q_{n}A_{1}\left( \alpha _{n}\right) & 
q_{n}r_{2}^{2}A_{2}\left( \alpha _{n}\right) & q_{n}r_{2}^{2}A_{3}\left(
\alpha _{n}\right)%
\end{array}%
\right.
\end{equation*}%
\begin{equation}
\left. 
\begin{array}{cccc}
\qquad \qquad & \cdots & q_{1}r_{1}^{n-2}B_{n-2}\left( \alpha _{1}\right) & 
q_{1}r_{1}^{n-2}B_{n-1}\left( \alpha _{1}\right) \\ 
\qquad \qquad & \cdots & q_{2}r_{2}^{n-2}A_{n-2}\left( \alpha _{2}\right) & 
q_{2}r_{2}^{n-2}A_{n-1}\left( \alpha _{2}\right) \\ 
\qquad \qquad & \cdots & q_{3}r_{1}^{n-2}B_{n-2}\left( \alpha _{3}\right) & 
q_{3}r_{1}^{n-2}B_{n-1}\left( \alpha _{3}\right) \\ 
\qquad \qquad & \ddots & \vdots & \vdots \\ 
\qquad \qquad & \cdots & q_{n-1}r_{1}^{n-2}B_{n-2}\left( \alpha _{n-1}\right)
& q_{n-1}r_{1}^{n-2}B_{n-1}\left( \alpha _{n-1}\right) \\ 
\qquad \qquad & \cdots & q_{n}r_{2}^{n-2}A_{n-2}\left( \alpha _{n}\right) & 
q_{n}r_{2}^{n-2}A_{n-1}\left( \alpha _{n}\right)%
\end{array}%
\right] .  \label{19}
\end{equation}%
We write the $s$th powers of the matrix $K_{n}$ as 
\begin{equation}
K_{n}^{s}=L_{n}J_{n}^{s}L_{n}^{-1}=W\left( s\right) =\left( w_{ij}\left(
s\right) \right) .  \label{20}
\end{equation}%
Then 
\begin{equation}
{\scriptsize {w_{ij}\left( s\right) =\left\{ 
\begin{array}{lll}
0 & ,\;if & (-1)^{i+j}=-1 \\ 
\begin{array}{ll}
\sum\limits_{z=1}^{\frac{n}{2}}{q_{2z-1}r_{1}^{l}\alpha
_{2z-1}^{s}B_{i-1}\left( \alpha _{2z-1}\right) B_{j-1}\left( \alpha
_{2z-1}\right) } & j=1,3,\ldots ,n-1 \\ 
&  \\ 
\sum\limits_{z=1}^{\frac{n}{2}}{q_{2z}r_{2}^{l}\alpha _{2z}^{s}A_{i-1}\left(
\alpha _{2z}\right) A_{j-1}\left( \alpha _{2z}\right) } & j=2,4,\ldots ,n%
\end{array}
& ,\;if & (-1)^{i+j}=1%
\end{array}%
\right. }}  \label{21}
\end{equation}%
here $r_{1}=\sqrt{\frac{b_{1}}{c_{1}}},\;r_{2}=\sqrt{\frac{b_{2}}{c_{2}}},$%
\begin{equation*}
l=\left\{ 
\begin{array}{ll}
j-1, & \;j=1,3,5,\ldots ,n-3,n-1 \\ 
j-2, & \;j=2,4,6,\ldots ,n-2,n%
\end{array}%
,\right.
\end{equation*}
\begin{equation*}
q_{i}=\left\{ 
\begin{array}{ll}
\frac{4-\left( \frac{\alpha _{i}-a_{1}}{\sqrt{b_{1}c_{1}}}\right) ^{2}}{n+2},
& \;i=2t+1\,(t\in \mathbb{N}) \\ 
\frac{4-\left( \frac{\alpha _{i}-a_{2}}{\sqrt{b_{2}c_{2}}}\right) ^{2}}{n+2},
& \;i=2t\,(t\in \mathbb{N}).%
\end{array}%
\right.
\end{equation*}%
and $\alpha _{k}$ are the eigenvalues of the matrix $K_{n}$ $(n=2t,\,t\in 
\mathbb{N})$.

\begin{corollary}
Let $K_{n}$ be $n$-square $(n=2t,\,t\in\mathbb{N}%
;\;a_{1},a_{2},b_{1},b_{2},c_{1},c_{2}\in\mathbb{C}\setminus \left\{
0\right\})$ pentadiagonal $2$-Toeplitz matrix as in (\ref{5}), from Theorem
7 
\begin{equation}
a_1\neq 2\sqrt{b_{1}c_{1}}cos\left(\frac{(k+1)\pi}{n+2}\right)
\end{equation}
$(k=2t+1,\,t\in\mathbb{N})$ and 
\begin{equation}
a_2\neq 2\sqrt{b_{2}c_{2}}cos\left(\frac{k\pi}{n+2}\right).
\end{equation}
$(k=2t,\,t\in\mathbb{N})$. In that case, there exists the inverse and
negative integer powers of the matrix $K_{n}$.
\end{corollary}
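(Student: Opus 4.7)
The plan is to deduce the existence of $K_n^{-1}$ (and hence of all negative integer powers) from the fact that the two listed hypotheses are exactly the conditions that force $0$ to not be an eigenvalue of $K_n$. First I would invoke Theorem 7, which provides the complete list of eigenvalues
\[
\alpha_k \;=\; a_1 - 2\sqrt{b_1c_1}\cos\!\left(\tfrac{(k+1)\pi}{n+2}\right) \quad(k\text{ odd}), \qquad \alpha_k \;=\; a_2 - 2\sqrt{b_2c_2}\cos\!\left(\tfrac{k\pi}{n+2}\right) \quad(k\text{ even}).
\]
The hypotheses in the statement say precisely that $a_1 \neq 2\sqrt{b_1c_1}\cos\left(\tfrac{(k+1)\pi}{n+2}\right)$ for all odd $k$ and $a_2 \neq 2\sqrt{b_2c_2}\cos\left(\tfrac{k\pi}{n+2}\right)$ for all even $k$, i.e.\ $\alpha_k \neq 0$ for every $k=1,2,\ldots,n$.

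Next I would note that $\det(K_n) = \prod_{k=1}^{n}\alpha_k$, so the hypotheses yield $\det(K_n)\neq 0$. Hence $K_n$ is nonsingular and $K_n^{-1}$ exists. Since inverses and integer powers of invertible matrices are well-defined, $K_n^{-s} = (K_n^{-1})^s$ exists for every positive integer $s$.

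Finally, to make the statement constructive, I would use the Jordan decomposition $K_n = L_n J_n L_n^{-1}$ already produced in Section 3. Since $0$ is not an eigenvalue, $J_n$ is invertible; inserting $J_n^{-s}$ in place of $J_n^s$ in (\ref{20}) yields a closed-form expression for the entries of $K_n^{-s}$, namely the right-hand side of (\ref{21}) with $\alpha_{2z-1}^{s}$ and $\alpha_{2z}^{s}$ replaced by $\alpha_{2z-1}^{-s}$ and $\alpha_{2z}^{-s}$.

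I do not expect a serious obstacle here: the work is simply to translate the stated non-equalities into non-vanishing of the eigenvalues supplied by Theorem 7. The only mild subtlety is to observe that the two hypotheses together cover all indices $k=1,\dots,n$ (the odd indices cover the $a_1$-branch eigenvalues and the even indices cover the $a_2$-branch ones), so that no eigenvalue is left unaccounted for; once this parity bookkeeping is made explicit, the conclusion is immediate from $\det(K_n)=\prod \alpha_k\neq 0$.
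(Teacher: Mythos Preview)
Your proposal is correct and matches the paper's intent: the paper offers no separate proof for this corollary, treating it as immediate from Theorem~7, and your argument---that the two hypotheses are precisely the conditions $\alpha_k\neq 0$ for all $k$, whence $\det K_n=\prod_k\alpha_k\neq 0$ and the formula~(\ref{20})--(\ref{21}) applies with negative $s$---is exactly the intended justification.
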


\section{Numerical examples}

\begin{example}
Taking $n=6$ in Theorem 7, we obtain 
\begin{eqnarray*}
J_{6} &=&diag(\alpha _{1},\alpha _{2},\alpha _{3},\alpha _{4},\alpha
_{5},\alpha _{6}) \\
&=&diag(a_{1}-\sqrt{2b_{1}c_{1}},a_{2}-\sqrt{2b_{2}c_{2}},a_{1},a_{2},a_{1}+%
\sqrt{2b_{1}c_{1}},a_{2}+\sqrt{2b_{2}c_{2}})
\end{eqnarray*}%
and%
\begin{eqnarray*}
K_{6}^{s} &=&L_{6}J_{6}^{s}L_{6}^{-1}=W\left( s\right) \\
&=&\left( w_{ij}\left( s\right) \right) =\left[ 
\begin{array}{cccccc}
x_{1} & 0 & x_{7} & 0 & x_{11} & 0 \\ 
0 & x_{2} & 0 & x_{8} & 0 & x_{12} \\ 
x_{3} & 0 & x_{9} & 0 & x_{7} & 0 \\ 
0 & x_{4} & 0 & x_{10} & 0 & x_{8} \\ 
x_{5} & 0 & x_{3} & 0 & x_{1} & 0 \\ 
0 & x_{6} & 0 & x_{4} & 0 & x_{2}%
\end{array}%
\right] ,
\end{eqnarray*}%
\begin{equation*}
\left. 
\begin{array}{lll}
x_{1} & = & \frac{1}{4}\left[ \left( a_{1}+\sqrt{2b_{1}c_{1}}\right)
^{s}+2a_{1}^{s}+\left( a_{1}-\sqrt{2b_{1}c_{1}}\right) ^{s}\right] \\ 
x_{2} & = & \frac{1}{4}\left[ \left( a_{2}+\sqrt{2b_{2}c_{2}}\right)
^{s}+2a_{2}^{s}+\left( a_{2}-\sqrt{2b_{2}c_{2}}\right) ^{s}\right] \\ 
x_{3} & = & \frac{\sqrt{2}}{4}r_{1}^{-1}\left[ \left( a_{1}+\sqrt{2b_{1}c_{1}%
}\right) ^{s}-\left( a_{1}-\sqrt{2b_{1}c_{1}}\right) ^{s}\right] \\ 
x_{4} & = & \frac{\sqrt{2}}{4}r_{2}^{-1}\left[ \left( a_{2}+\sqrt{2b_{2}c_{2}%
}\right) ^{s}-\left( a_{2}-\sqrt{2b_{2}c_{2}}\right) ^{s}\right] \\ 
x_{5} & = & \frac{1}{4}r_{1}^{-2}\left[ \left( a_{1}+\sqrt{2b_{1}c_{1}}%
\right) ^{s}-2a_{1}^{s}+\left( a_{1}-\sqrt{2b_{1}c_{1}}\right) ^{s}\right]
\\ 
x_{6} & = & \frac{1}{4}r_{2}^{-2}\left[ \left( a_{2}+\sqrt{2b_{2}c_{2}}%
\right) ^{s}-2a_{2}^{s}+\left( a_{2}-\sqrt{2b_{2}c_{2}}\right) ^{s}\right]
\\ 
x_{7} & = & \frac{\sqrt{2}}{4}r_{1}\left[ \left( a_{1}+\sqrt{2b_{1}c_{1}}%
\right) ^{s}-\left( a_{1}-\sqrt{2b_{1}c_{1}}\right) ^{s}\right] \\ 
x_{8} & = & \frac{\sqrt{2}}{4}r_{2}\left[ \left( a_{2}+\sqrt{2b_{2}c_{2}}%
\right) ^{s}-\left( a_{2}-\sqrt{2b_{2}c_{2}}\right) ^{s}\right] \\ 
x_{9} & = & \frac{1}{2}\left[ \left( a_{1}+\sqrt{2b_{1}c_{1}}\right)
^{s}+\left( a_{1}-\sqrt{2b_{1}c_{1}}\right) ^{s}\right] \\ 
x_{10} & = & \frac{1}{2}\left[ \left( a_{2}+\sqrt{2b_{2}c_{2}}\right)
^{s}+\left( a_{2}-\sqrt{2b_{2}c_{2}}\right) ^{s}\right] \\ 
x_{11} & = & \frac{1}{4}r_{1}^{2}\left[ \left( a_{1}+\sqrt{2b_{1}c_{1}}%
\right) ^{s}-2a_{1}^{s}+\left( a_{1}-\sqrt{2b_{1}c_{1}}\right) ^{s}\right]
\\ 
x_{12} & = & \frac{1}{4}r_{2}^{2}\left[ \left( a_{2}+\sqrt{2b_{2}c_{2}}%
\right) ^{s}-2a_{2}^{s}+\left( a_{2}-\sqrt{2b_{2}c_{2}}\right) ^{s}\right] .%
\end{array}%
\right.
\end{equation*}
\end{example}

\begin{example}
Taking $s=3, n=8, a_{1}=1, a_{2}=i+1, b_{1}=3, b_{2}=i+3, c_{1}=5$ and $%
c_{2}=i+5 $ in Theorem 7, we obtain 
\begin{eqnarray*}
J_{8}&=&diag(\alpha_{1},\alpha_{2},\alpha_{3},\alpha_{4},\alpha_{5},%
\alpha_{6},\alpha_{7},\alpha_{8}) \\
&=&diag(-5.267,-5.280-0.668i,-1.394,-1.399-0.363i, \\
&& \quad\quad \quad\quad \quad\quad3.3394,3.399+1.637i,7.267,7.280+2.668i)
\end{eqnarray*}
and 
\begin{eqnarray*}
K_{8}^{3} &=&L_{8}J_{8}^{3}L_{8}^{-1}=W\left( 3\right) \\
&=&\left( w_{ij}\left( 3\right) \right) =\left[ 
\begin{array}{ccccc}
46 & 0 & 99 & 0 & 27 \\ 
0 & 16+68i & 0 & 62+94i & 0 \\ 
165 & 0 & 91 & 0 & 144 \\ 
0 & 118+138i & 0 & 34+134i & 0 \\ 
75 & 0 & 240 & 0 & 91 \\ 
0 & 42+102i & 0 & 180+192i & 0 \\ 
125 & 0 & 75 & 0 & 165 \\ 
0 & 110+74i & 0 & 42+102i & 0%
\end{array}
\right.
\end{eqnarray*}
\begin{eqnarray*}
\left. 
\begin{array}{cccc}
\qquad \qquad \qquad \qquad \qquad \qquad \qquad \qquad \qquad & 0 & 27 & 0
\\ 
\qquad \qquad \qquad \qquad \qquad \qquad \qquad \qquad \qquad & 6+42i & 0 & 
18+26i \\ 
\qquad \qquad \qquad \qquad \qquad \qquad \qquad \qquad \qquad & 0 & 27 & 0
\\ 
\qquad \qquad \qquad \qquad \qquad \qquad \qquad \qquad \qquad & 96+12i & 0
& 6+42i \\ 
\qquad \qquad \qquad \qquad \qquad \qquad \qquad \qquad \qquad & 0 & 99 & 0
\\ 
\qquad \qquad \qquad \qquad \qquad \qquad \qquad \qquad \qquad & 34+134i & 0
& 62+94i \\ 
\qquad \qquad \qquad \qquad \qquad \qquad \qquad \qquad \qquad & 0 & 46 & 0
\\ 
\qquad \qquad \qquad \qquad \qquad \qquad \qquad \qquad \qquad & 118+138i & 0
& 16+68i%
\end{array}
\right].
\end{eqnarray*}
\end{example}

\begin{example}
Taking $s=-4,n=10,a_{1}=1,a_{2}=2,b_{1}=3,b_{2}=4,c_{1}=5$ and $c_{2}=6$ in
Theorem 7, we get 
\begin{eqnarray*}
J_{10} &=&diag(\alpha _{1},\alpha _{2},\alpha _{3},\alpha _{4},\alpha
_{5},\alpha _{6},\alpha _{7},\alpha _{8},\alpha _{9},\alpha _{10}) \\
&=&diag(-5.7082,-6.4853,-2.8730,-2.8990,1,2,4.8730,6.8990,7.7082,10.4853)
\end{eqnarray*}%
and 
\begin{eqnarray*}
K_{10}^{-4} &=&L_{10}J_{10}^{-4}L_{10}^{-1}=W\left( -4\right) \\
&=&\left( w_{ij}\left( -4\right) \right) =\left[ 
\begin{array}{ccccc}
0.3375 & 0 & -0.0026 & 0 & -0.1999 \\ 
0 & 0.0245 & 0 & -0.0029 & 0 \\ 
-0.0043 & 0 & 0.0044 & 0 & -0.0001 \\ 
0 & -0.0043 & 0 & 0.0038 & 0 \\ 
-0.5552 & 0 & -0.0002 & 0 & 0.3337 \\ 
0 & -0.0311 & 0 & -0.0002 & 0 \\ 
0.0067 & 0 & -0.0063 & 0 & -0.0002 \\ 
0 & 0.0062 & 0 & -0.0052 & 0 \\ 
0.9148 & 0 & 0.0067 & 0 & -0.5552 \\ 
0 & 0.0388 & 0 & 0.0062 & 0%
\end{array}%
\right.
\end{eqnarray*}%
\begin{equation*}
\quad \quad \quad \quad \quad \quad \quad \quad \quad \quad \quad \quad
\left. 
\begin{array}{ccccc}
0 & 0.0015 & 0 & 0.1186 & 0 \\ 
-0.0138 & 0 & 0.0018 & 0 & 0.0077 \\ 
0 & -0.0023 & 0 & 0.0015 & 0 \\ 
-0.0001 & 0 & 0.0023 & 0 & 0.0018 \\ 
0 & -0.0001 & 0 & -0.1999 & 0 \\ 
0.0210 & 0 & -0.0001 & 0 & -0.0138 \\ 
0 & 0.0044 & 0 & -0.0026 & 0 \\ 
-0.0001 & 0 & 0.0038 & 0 & -0.0029 \\ 
0 & -0.0043 & 0 & 0.3375 & 0 \\ 
0.0311 & 0 & -0.0043 & 0 & 0.0245%
\end{array}%
\right] .
\end{equation*}
\end{example}

\end{document}